\title{Using Tropical Optimization Techniques to\\ Evaluate Alternatives via Pairwise Comparisons\thanks{This work was supported in part by the Russian Foundation for Humanities (grant No. 16-02-00059).}}
\author{N. Krivulin\thanks{St.~Petersburg State University, Universitetskaya nab.~7/9, St.~Petersburg, 199034, Russia, nkk@math.spbu.ru.}}
\date{}
\newtheorem{theorem}{Theorem}
\newtheorem{lemma}[theorem]{Lemma}
\newtheorem{corollary}[theorem]{Corollary}
\theoremstyle{definition}
\newtheorem{example}{Example}
\begin{document}

\maketitle

\begin{abstract}
We describe a new approach based on tropical optimization techniques to solve the problem of rating alternatives from pairwise comparison data. The problem is formulated to approximate, in the log-Chebyshev sense, pairwise comparison matrices by reciprocal matrices of unit rank, and then represented in general terms of tropical mathematics as a tropical optimization problem. The optimization problem takes a common, unified form for both multiplicative and additive comparison scales. We apply recent results in tropical optimization to offer new complete solutions to the rating problems under various assumptions about the pairwise comparison matrices. The solutions are given in a compact vector form, which extends known solutions and involves modest computational efforts. The results obtained are illustrated with numerical examples. Specifically, we show by example that the partial solution known before may miss better results provided by the new complete solution. An example to demonstrate a tropical analogue of the analytical hierarchy process decision scheme is also given.
\\

\textbf{Key-Words:} idempotent semifield, tropical optimization, matrix approximation, pairwise comparison, consistent matrix.
\\

\textbf{MSC (2010):} 65K10, 15A80, 65K05, 41A50, 90B50
\end{abstract}

\section{Introduction}

Tropical optimization problems present an important research and application domain of tropical mathematics, which finds use in solving real-world problems in various fields, including project scheduling, location analysis and decision making. Tropical (idempotent) mathematics deals with the theory and application of semirings with idempotent addition, and dates back to a few influential works by Pandit \cite{Pandit1961Anew}, Cuninghame-Green \cite{Cuninghamegreen1962Describing}, Giffler \cite{Giffler1963Scheduling}, Hoffman \cite{Hoffman1963Onabstract}, Vorob{'}ev \cite{Vorobjev1963Theextremal}, and Romanovski\u{\i} \cite{Romanovskii1964Asymptotic}, appeared in the early 1960s.  

In later decades, a large body of literature was published on various aspects of tropical mathematics, including recent monographs by Kolokoltsov and Maslov \cite{Kolokoltsov1997Idempotent}, Golan \cite{Golan2003Semirings}, Heidergott et al. \cite{Heidergott2006Maxplus}, Itenberg et. al \cite{Itenberg2007Tropical}, Gondran and Minoux \cite{Gondran2008Graphs}, McEneaney \cite{Mceneaney2010Maxplus}, and Maclagan and Sturmfels \cite{Maclagan2015Introduction}, as well as numerous research and applied papers. Many studies, such as the early papers \cite{Cuninghamegreen1962Describing,Hoffman1963Onabstract}, were motivated and illustrated by extremal problems drawn from operations research, computer science, and other areas. A noticeable part of these problems can be directly formulated and solved within the framework of tropical mathematics as tropical optimization problems.

Tropical optimization problems are usually formulated to minimize or maximize a function defined on vectors over an idempotent semifield (a semiring with multiplicative inverses), with or without constraints on the feasible solutions. The objective function can be either linear or nonlinear; the nonlinear objective functions are typically defined through multiplicative conjugate transposition of vectors. The constraints take the form of linear vector equations and inequalities. Some of the problems are completely solved under quite general assumptions in a closed compact vector form, whereas the other problems have solutions available only in the form of a numerical algorithm, which produces a partial solution if any exists, or indicates the infeasibility of the problem. For further details and references, one can consult, for instance, the overviews by Krivulin \cite{Krivulin2014Tropical,Krivulin2015Amultidimensional}.

One of the application areas, where tropical optimization can be of service, is the analysis of preferences based on pairwise comparison data in decision making. Given a matrix that contains the result of pairwise comparisons of alternatives, the problem of interest is to rate (judge, score) the alternatives. The problem arises in various contexts (see the classical works by Thurstone \cite{Thurstone1927Alaw}, Saaty \cite{Saaty1980Theanalytic}, and David \cite{David1988Method} to list a few). In the tropical mathematics setting, the problem is examined in the papers by Elsner and van den Driessche \cite{Elsner2004Maxalgebra,Elsner2010Maxalgebra}, Tran \cite{Tran2013Pairwise}, and Gursoy et al. \cite{Gursoy2013Theanalytic}, where a solution is proposed to calculate a tropical eigenvector of the matrix as the vector of final scores assigned to the alternatives. The solution is used in \cite{Gursoy2013Theanalytic} as an alternative calculation technique for the Analytical Hierarchy Process (AHP) decision scheme \cite{Saaty1980Theanalytic}. 

New general methods of tropical optimization were recently developed by Krivulin in \cite{Krivulin2014Aconstrained,Krivulin2015Extremal,Krivulin2015Amultidimensional} to provide a useful framework for solving many optimization problems, including the above problem of rating alternatives in decision making \cite{Krivulin2015Rating}. The methods offer direct solutions in a closed vector form, which is suitable for further analysis and practical application.

The purpose of this paper is to describe a new approach based on tropical optimization techniques to solve the problem of rating alternatives from pairwise comparisons. We formulate the problem to approximate, in log-Chebyshev sense, pairwise comparison matrices by reciprocal matrices of unit rank, and then represent the problem in general terms of tropical mathematics as a tropical optimization problem. The optimization problem takes a common, unified form for both multiplicative and additive comparison scales.

We apply recent results in tropical optimization to offer new complete solutions to the rating problems under various assumptions about the pairwise comparison matrices. The solutions are given in a compact vector form, which extends known solutions and involves modest computational efforts. The results obtained are illustrated with numerical examples. We show by example that the partial solution, which was known before, may miss better results provided by the new complete solution. An example to demonstrate a tropical analogue of the AHP decision scheme is also given.

The paper is organized as follows. Section~\ref{S-PCJ} outlines the method of ranking alternatives from their pairwise comparisons as a starting point for the subsequent discussion. Furthermore, in Section~\ref{S-BDNR}, we offer a brief overview of preliminary definitions, notation and results of tropical mathematics to provide an analytical framework for the development of new solutions. In Section~\ref{S-MAP}, the problems of approximating square matrices by reciprocal matrices of unit rank are solved as tropical optimization problems. Section~\ref{S-ACM} uses results of the previous section to derive the vector of final scores from pairwise comparison data. Finally, Section~\ref{S-AES} demonstrates various examples of solving problems to evaluate the scores of alternatives.

\section{Pairwise Comparison Judgments}
\label{S-PCJ}

The method of pairwise comparisons is widely accepted in decision making to estimate preferences when a direct evaluation of the preferences is impossible or infeasible. Given a set of alternatives, the method uses the result of pairwise comparisons with an appropriate scale to make judgment on the relative preference of each alternative by evaluating its individual score or rating (see, e.g., \cite{Thurstone1927Alaw,Saaty1980Theanalytic,David1988Method,Saaty2013Onthemeasurement} for further details).

\subsection{Pairwise Comparison Matrices}
\label{S-PCM}

The results of comparing the alternatives in pairs on a multiplicative or additive scale are described by pairwise comparison matrices that have a specific form. Consider a pairwise comparison matrix $\bm{A}=(a_{ij})$ obtained with a multiplicative scale. The entry $a_{ij}$ of the matrix shows that alternative $i$ is preferred to alternative $j$ by $a_{ij}$ times. The multiplicative comparison matrix is symmetrically reciprocal, which means that it has only positive entries satisfying the condition
$$
a_{ij}
=
1/a_{ji}.
$$

If an additive scale is used, the matrix $\bm{A}$ has the skew-symmetric form with the entries, which compare preferences in terms of differences, and fit the equality
$$
a_{ij}
=
-a_{ji}.
$$

In practice, however, multiplicative (additive) pairwise comparison matrices can have a form that differs from the symmetrically reciprocal (skew-symmetric) form, due to inexact or incorrect measurements.

To provide consistency of and to avoid confusion in the data given by pairwise comparison matrices, these data must be transitive, which requires that the entries of the multiplicative (additive) comparison matrix satisfy the equality
$$
a_{ij}
=
a_{ik}a_{kj}
\qquad
(a_{ij}
=
a_{ik}+a_{kj}).
$$ 

If a pairwise comparison matrix has only transitive entries, it is called consistent. For each multiplicative (additive) consistent matrix $\bm{A}=(a_{ij})$, there is a positive (real) vector $\bm{x}=(x_{i})$ whose elements completely determine the entries of $\bm{A}$ as follows:
$$
a_{ij}
=
x_{i}/x_{j}
\qquad
(a_{ij}
=
x_{i}-x_{j}).
$$

Finally, provided that a matrix $\bm{A}$ is consistent, its corresponding vector $\bm{x}$ is considered to represent directly (up to a positive factor) the individual scores of alternatives, and thus offers a solution to the problem of analysis of preference in question.

\subsection{Approximation by Consistent Matrices}

The pairwise comparison matrices encountered in real-world applications are generally inconsistent, which can be caused by various reasons from limitations in human judgment to errors in the source data. Therefore, the problem of approximating a pairwise comparison matrix $\bm{A}$ by a consistent matrix arises in the general form
\begin{equation}
\begin{aligned}
&
\text{minimize}
&&
d(\bm{A},\bm{X}),
\end{aligned}
\label{P-dAX}
\end{equation}
where the minimum is taken over all consistent matrices $\bm{X}$, and $d$ is a suitable measure of approximation error.

Note that any consistent matrix $\bm{X}$ is uniquely determined, up to a positive factor, by a single vector $\bm{x}$, and hence the solution of problem \eqref{P-dAX} is equivalent to finding $\bm{x}$. Since, in the context of the analysis of preference, the vector $\bm{x}$ presents the overall individual scores of alternatives, the problem of evaluating the scores is reduced to the above approximation problem.

To solve problem \eqref{P-dAX} for a pairwise comparison matrix, several approaches are used, which include approximation with the principal eigenvector of the matrix \cite{Saaty1980Theanalytic,Saaty1984Comparison}, least squares approximation \cite{Saaty1984Comparison,Chu1998Ontheoptimal} and other approximation techniques \cite{Barzilai1997Deriving,Farkas2003Consistency,Gonzalezpachon2003Transitive,Gavalec2015Decision}. These approaches normally provide algorithmic solutions in the form of numerical procedures, such as the power iterations in the principal eigenvector method, and the Newton algorithm in the least squares approximation.

Another solution approach, based on the application of tropical mathematics, is proposed and investigated in \cite{Elsner2004Maxalgebra,Elsner2010Maxalgebra,Tran2013Pairwise,Gursoy2013Theanalytic}. This approach consists of approximating the pairwise comparison matrix by a consistent matrix defined by a tropical eigenvector, and hence constitutes a tropical counterpart of the conventional principal eigenvector method. Moreover, the analysis in \cite{Elsner2010Maxalgebra} shows that the approximate consistent matrices, which solve the problem in the tropical mathematics setting, can be provided not only by tropical eigenvectors, but by other vectors as well. As one of the results of \cite{Elsner2010Maxalgebra}, a technique to find new solutions is proposed, which, however, offers a computational algorithm, rather than gives a direct solution in an explicit form.

In subsequent sections, we formulate the problem of finding approximate consistent matrices as a general problem of approximation by reciprocal matrices of rank one in the topical mathematics setting. It is shown that the problem can be considered as matrix approximation in the Chebyshev or Chebyshev-like metrics. We apply recent results in tropical optimization to offer complete, direct solutions of the approximation problems.

\section{Basic Definitions, Notation and Results}
\label{S-BDNR}

In this section, we follow the presentation given in \cite{Krivulin2014Aconstrained,Krivulin2014Tropical,Krivulin2015Extremal,Krivulin2015Amultidimensional} to outline preliminary definitions and results of tropical mathematics, which offer an analytical framework to the solutions in the subsequent sections. For further details and discussion, one can consult recent publications \cite{Kolokoltsov1997Idempotent,Golan2003Semirings,Heidergott2006Maxplus,Itenberg2007Tropical,Gondran2008Graphs,Mceneaney2010Maxplus,Maclagan2015Introduction}.

\subsection{Idempotent Semifield}

Let $\mathbb{X}$ be a carrier set that is equipped with binary operations $\oplus$ and $\otimes$, called addition and multiplication, which have neutral elements $\mathbb{0}$ and $\mathbb{1}$, called the zero and identity. Both operations are associative and commutative, and multiplication is distributive over addition. Addition is idempotent, resulting in the equality $x\oplus x=x$ for all $x\in\mathbb{X}$. Moreover, multiplication is invertible, implying that each nonzero $x\in\mathbb{X}$ has its inverse $x^{-1}$ such that $x\otimes x^{-1}=\mathbb{1}$. Together with these operations, the carrier set $\mathbb{X}$ forms the algebraic system $(\mathbb{X},\oplus,\otimes,\mathbb{0},\mathbb{1})$, which is usually referred to as the idempotent semifield.

The semifield is considered linearly ordered by an order that is consistent with the partial order produced by idempotent addition to define $x\leq y$ if and only if $x\oplus y=y$. In addition, the semifield is assumed to be algebraically complete, which means that the equation $x^{p}=a$, where $x^{p}$ denotes the iterated product, has solutions for any $a\in\mathbb{X}$ and integer $p>0$, and thus the powers with rational exponents are well-defined.

In the algebraic expressions below, the multiplication sign $\otimes$, as usual, is omitted for the sake of brevity. The power notation is always understood in the sense of tropical mathematics.

Examples of the idempotent semifield under consideration include
\begin{align*}
\mathbb{R}_{\max,\times}
&=
(\mathbb{R}_{+}\cup\{0\},\max,\times,0,1),
\\
\mathbb{R}_{\max,+}
&=
(\mathbb{R}\cup\{-\infty\},\max,+,-\infty,0),
\end{align*}
where $\mathbb{R}$ is the set of reals, and $\mathbb{R}_{+}=\{x\in\mathbb{R}|x>0\}$.

The semifield $\mathbb{R}_{\max,\times}$ is equipped with the addition $\oplus$ defined as maximum, and the multiplication $\otimes$ defined as usual. The neutral elements $\mathbb{0}$ and $\mathbb{1}$ coincide with the arithmetic zero and one, respectively. The power and inversion notations have the usual meaning. 

The semifield $\mathbb{R}_{\max,+}$ involves $\oplus=\max$, $\otimes=+$, $\mathbb{0}=-\infty$ and $\mathbb{1}=0$. For each $x\in\mathbb{R}$, the inverse $x^{-1}$ coincides with the conventional opposite number $-x$. The power notation $x^{y}$, where $x,y\in\mathbb{R}$, corresponds to the regular arithmetic product $xy$.

In both semifields, the order induced by idempotent addition corresponds to the natural linear order on $\mathbb{R}$.

\subsection{Vector and Matrix Algebra}

The column vectors with $n$ elements over $\mathbb{X}$ form the set $\mathbb{X}^{n}$. A vector with all elements equal to $\mathbb{0}$ is the zero vector denoted $\bm{0}$. A vector is called regular if it has no zero elements.

Vector addition and scalar multiplication follow the usual element-wise rules, where the scalar operations $\oplus$ and $\otimes$ act as the standard addition and multiplication.

A vector $\bm{b}$ is said to be linearly dependent on vectors $\bm{a}_{1},\ldots,\bm{a}_{m}$ if there are scalars $x_{1},\ldots,x_{m}$ such that $\bm{b}=x_{1}\bm{a}_{1}\oplus\cdots\oplus x_{m}\bm{a}_{m}$. Vectors $\bm{a}$ and $\bm{b}$ are collinear if $\bm{b}=x\bm{a}$ for some $x$.

The system of vectors $\bm{a}_{1},\ldots,\bm{a}_{m}$ is called linearly dependent if at least one of the vectors is dependent on others, and independent otherwise. The set of linear combinations $x_{1}\bm{a}_{1}\oplus\cdots\oplus x_{m}\bm{a}_{m}$ for all $x_{1},\ldots,x_{m}$ is closed under vector addition and scalar multiplication, and is referred to as the idempotent vector space generated by the system.

The multiplicative conjugate transpose of a nonzero column vector $\bm{x}=(x_{i})$ is a row vector $\bm{x}^{-}=(x_{i}^{-})$ with the elements $x_{i}^{-}=x_{i}^{-1}$ if $x_{i}\ne\mathbb{0}$, and $x_{i}^{-}=\mathbb{0}$ otherwise.

The set of matrices with $m$ rows and $n$ columns is denoted by $\mathbb{X}^{m\times n}$. A matrix with all zero entries is the zero matrix. Matrix addition, matrix multiplication and scalar multiplication are given by the conventional entry-wise formulae, where the operations $\oplus$ and $\otimes$ play the role of the usual addition and multiplication.

Consider a matrix $\bm{A}\in\mathbb{X}^{m\times n}$. The transpose of $\bm{A}$ is the matrix denoted by $\bm{A}^{T}\in\mathbb{X}^{n\times m}$. The multiplicative conjugate transpose of any nonzero matrix $\bm{A}=(a_{ij})$ is the matrix $\bm{A}^{-}=(a_{ij}^{-})$ with the entries $a_{ij}^{-}=a_{ji}^{-1}$ if $a_{ji}\ne\mathbb{0}$, and $a_{ij}^{-}=\mathbb{0}$ otherwise.

The rank of a matrix is defined as the maximum number of linearly independent columns (rows) in the matrix. A matrix $\bm{A}$ has rank $1$ if and only if $\bm{A}=\bm{x}\bm{y}^{T}$, where $\bm{x}$ and $\bm{y}$ are regular column vectors. 

Consider the square matrices of order $n$ in the set $\mathbb{X}^{n\times n}$. A matrix that has $\mathbb{1}$ along the diagonal and $\mathbb{0}$ elsewhere is the identity matrix denoted $\bm{I}$. The power notation indicates iterated products as $\bm{A}^{0}=\bm{I}$ and $\bm{A}^{p}=\bm{A}^{p-1}\bm{A}$ for any matrix $\bm{A}$ and integer $p>0$.

A matrix $\bm{A}$ that satisfies the condition $\bm{A}^{-}=\bm{A}$ is called symmetrically reciprocal (or, simply, reciprocal). A reciprocal matrix $\bm{A}$ is of unit rank if and only if $\bm{A}=\bm{x}\bm{x}^{-}$, where $\bm{x}$ is a regular column vector.

The trace of a matrix $\bm{A}=(a_{ij})\in\mathbb{X}^{n\times n}$ is given by
$$
\mathop\mathrm{tr}\bm{A}
=
a_{11}\oplus\cdots\oplus a_{nn}.
$$

For any matrices $\bm{A}$ and $\bm{B}$, and scalar $x$, the following equalities hold:
\begin{gather*}
\mathop\mathrm{tr}(\bm{A}\oplus\bm{B})
=
\mathop\mathrm{tr}\bm{A}
\oplus
\mathop\mathrm{tr}\bm{B},
\qquad
\mathop\mathrm{tr}(\bm{A}\bm{B})
=
\mathop\mathrm{tr}(\bm{B}\bm{A}),
\\
\mathop\mathrm{tr}(x\bm{A})
=
x\mathop\mathrm{tr}\bm{A}.
\end{gather*}

\subsection{Distance Functions}

The distance between two regular vectors $\bm{x},\bm{y}\in\mathbb{X}^{n}$ is defined by the function
$$
d(\bm{x},\bm{y})
=
\bm{y}^{-}\bm{x}
\oplus
\bm{x}^{-}\bm{y},
$$
which attains the minimum value $\mathbb{1}$ when $\bm{y}=\bm{x}$. 

In the semifield $\mathbb{R}_{\max,+}$, this function has the minimum $\mathbb{1}=0$ and coincides with the Chebyshev metric
$$
d_{\infty}(\bm{x},\bm{y})
=
\max_{1\leq i\leq n}|x_{i}-y_{i}|.
$$

For $\mathbb{R}_{\max,\times}$, the function $d$ becomes a Chebyshev distance in the log-log scale after taking the logarithm,
$$
d_{\infty}^{\prime}(\bm{x},\bm{y})
=
\log d(\bm{x},\bm{y})
=
\max_{1\leq i\leq n}|\log x_{i}-\log y_{i}|.
$$

In the general case, the function $d$ is called the Chebyshev-like distance.

For two matrices $\bm{A},\bm{B}\in\mathbb{X}^{n\times n}$ without zero entries, the Chebyshev-like distance function is given by
\begin{equation}
d(\bm{A},\bm{B})
=
\mathop\mathrm{tr}(\bm{B}^{-}\bm{A})
\oplus
\mathop\mathrm{tr}(\bm{A}^{-}\bm{B}),
\label{E-dAB}
\end{equation}
which takes the form of the Chebyshev metric in the semifield $\mathbb{R}_{\max,+}$, and of a log-Chebyshev distance after logarithmic transformation in $\mathbb{R}_{\max,\times}$.

\subsection{Eigenvalues and Eigenvectors of Matrices}

A scalar $\lambda\in\mathbb{X}$ is an eigenvalue of a matrix $\bm{A}\in\mathbb{X}^{n\times n}$ if there exists a nonzero vector $\bm{x}\in\mathbb{X}^{n}$ such that the equality $\bm{A}\bm{x}=\lambda\bm{x}$ holds. The vector $\bm{x}$, which satisfies the equality, is an eigenvector of $\bm{A}$, corresponding to $\lambda$.

The maximum eigenvalue of a matrix $\bm{A}=(a_{ij})$ is called the spectral radius of $\bm{A}$, and calculated as
$$
\lambda
=
\mathop\mathrm{tr}\nolimits\bm{A}\oplus\cdots\oplus\mathop\mathrm{tr}\nolimits^{1/n}(\bm{A}^{n}),
$$
or, in terms of the entries of the matrix $\bm{A}$, as
\begin{equation}
\lambda
=
\bigoplus_{k=1}^{n}\bigoplus_{1\leq i_{1},\ldots,i_{k}\leq n}(a_{i_{1}i_{2}}a_{i_{2}i_{3}}\cdots a_{i_{k}i_{1}})^{1/k}.
\label{E-lambda-ai1i2ai2i3aiki1}
\end{equation}

Any matrix $\bm{A}$ with nonzero entries has only one eigenvalue, which coincides with the spectral radius $\lambda$ given by the above expressions. The eigenvectors of $\bm{A}$, which correspond to $\lambda$, are derived as follows. Calculate the matrix $\bm{A}_{\lambda}=\lambda^{-1}\bm{A}$, and then find the matrices
$$
\bm{A}_{\lambda}^{\ast}
=
\bm{I}\oplus\bm{A}_{\lambda}\oplus\cdots\oplus\bm{A}_{\lambda}^{n-1},
$$
and $\bm{A}_{\lambda}^{\times}=\bm{A}_{\lambda}\bm{A}_{\lambda}^{\ast}$. Finally, the matrix $\bm{A}_{\lambda}^{+}$ is formed by taking those columns that coincide in both matrices $\bm{A}_{\lambda}^{\ast}$ and $\bm{A}_{\lambda}^{\times}$. All eigenvectors of the matrix $\bm{A}$ are given by
$$
\bm{x}
=
\bm{A}_{\lambda}^{+}\bm{u},
$$
where $\bm{u}$ is any nonzero vector, and hence constitute an idempotent vector space spanned by the columns in $\bm{A}_{\lambda}^{+}$.

\subsection{Tropical Optimization Problem}

Consider the following optimization problem in the tropical mathematics setting: given a matrix $\bm{A}\in\mathbb{X}^{n\times n}$, find regular vectors $\bm{x}=(x_{j})\in\mathbb{X}^{n}$ that
\begin{equation}
\begin{aligned}
&
\text{minimize}
&&
\bm{x}^{-}\bm{A}\bm{x},
\\
&
\text{subject to}
&&
x_{j}
>
\mathbb{0},
\quad
j=1,\ldots,n.
\end{aligned}
\label{P-minxAx}
\end{equation}

The next complete, direct solution to the problem is obtained using various arguments in \cite{Krivulin2014Aconstrained,Krivulin2015Extremal,Krivulin2015Amultidimensional}.
\begin{lemma}
\label{L-minxAx}
Let $\bm{A}$ be a matrix with spectral radius $\lambda>\mathbb{0}$, and $\bm{A}_{\lambda}=\lambda^{-1}\bm{A}$. Then, the minimum value in problem \eqref{P-minxAx} is equal to $\lambda$, and all regular solutions are given by
$$
\bm{x}
=
\bm{A}_{\lambda}^{\ast}\bm{u},
\qquad
\bm{u}
\ne
\bm{0}.
$$
\end{lemma}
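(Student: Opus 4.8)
The plan is to establish the result in two parts: a lower bound $\bm{x}^{-}\bm{A}\bm{x}\geq\lambda$ valid for every regular $\bm{x}$, and a characterization of the vectors at which this bound is attained. First I would prove the lower bound. Writing $\mu=\bm{x}^{-}\bm{A}\bm{x}=\bigoplus_{i,j}x_{i}^{-1}a_{ij}x_{j}$, each summand satisfies $x_{i}^{-1}a_{ij}x_{j}\leq\mu$, which rearranges to the entrywise estimate $a_{ij}\leq\mu x_{i}x_{j}^{-1}$. Multiplying these estimates along an arbitrary cyclic product $a_{i_{1}i_{2}}a_{i_{2}i_{3}}\cdots a_{i_{k}i_{1}}$ makes the factors $x_{i}$ cancel telescopically and yields $a_{i_{1}i_{2}}\cdots a_{i_{k}i_{1}}\leq\mu^{k}$, hence $(a_{i_{1}i_{2}}\cdots a_{i_{k}i_{1}})^{1/k}\leq\mu$. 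Taking the $\oplus$-sum over all lengths $k$ and index tuples and comparing with the expression \eqref{E-lambda-ai1i2ai2i3aiki1} for the spectral radius gives $\lambda\leq\mu$, so $\lambda$ is a lower bound for the objective.

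Next I would reduce to the normalized matrix. Since $\bm{x}^{-}\bm{A}\bm{x}=\lambda\bm{x}^{-}\bm{A}_{\lambda}\bm{x}$ and the spectral radius of $\bm{A}_{\lambda}=\lambda^{-1}\bm{A}$ equals $\mathbb{1}$ (cyclic products scale by $\lambda^{-1}$), the lower bound applied to $\bm{A}_{\lambda}$ shows $\bm{x}^{-}\bm{A}_{\lambda}\bm{x}\geq\mathbb{1}$; thus a regular $\bm{x}$ attains the minimum $\lambda$ exactly when $\bm{x}^{-}\bm{A}_{\lambda}\bm{x}\leq\mathbb{1}$. Because $\bm{x}^{-}\bm{A}_{\lambda}\bm{x}$ is an $\oplus$-sum of the terms $x_{i}^{-1}(\bm{A}_{\lambda})_{ij}x_{j}$, this scalar inequality holds if and only if $(\bm{A}_{\lambda})_{ij}x_{j}\leq x_{i}$ for every $i,j$, that is, if and only if the vector inequality $\bm{A}_{\lambda}\bm{x}\leq\bm{x}$ holds. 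So it remains to solve this linear inequality.

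Finally I would describe the solution set of $\bm{A}_{\lambda}\bm{x}\leq\bm{x}$. For sufficiency, using that the spectral radius of $\bm{A}_{\lambda}$ is $\mathbb{1}$ the Kleene star $\bm{A}_{\lambda}^{\ast}=\bm{I}\oplus\bm{A}_{\lambda}\oplus\cdots\oplus\bm{A}_{\lambda}^{n-1}$ converges and satisfies $\bm{A}_{\lambda}\bm{A}_{\lambda}^{\ast}\leq\bm{A}_{\lambda}^{\ast}$, so any $\bm{x}=\bm{A}_{\lambda}^{\ast}\bm{u}$ obeys $\bm{A}_{\lambda}\bm{x}\leq\bm{x}$. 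For necessity, iterating $\bm{A}_{\lambda}\bm{x}\leq\bm{x}$ gives $\bm{A}_{\lambda}^{k}\bm{x}\leq\bm{x}$ for all $k$; summing and using the identity term yields $\bm{x}\leq\bm{A}_{\lambda}^{\ast}\bm{x}\leq\bm{x}$, whence $\bm{x}=\bm{A}_{\lambda}^{\ast}\bm{x}$ exhibits $\bm{x}$ in the required form with $\bm{u}=\bm{x}$. Since the diagonal entries of $\bm{A}_{\lambda}^{\ast}$ equal $\mathbb{1}$ and $\bm{A}_{\lambda}^{\ast}$ is everywhere nonzero in the case of interest, every $\bm{A}_{\lambda}^{\ast}\bm{u}$ with $\bm{u}\neq\bm{0}$ is regular, completing the characterization.

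I would expect the main obstacle to be the lower-bound step: setting up the entrywise estimate and recognizing that the telescoping cyclic products reproduce precisely the spectral-radius formula \eqref{E-lambda-ai1i2ai2i3aiki1}. The equivalence between the scalar inequality $\bm{x}^{-}\bm{A}_{\lambda}\bm{x}\leq\mathbb{1}$ and the vector inequality $\bm{A}_{\lambda}\bm{x}\leq\bm{x}$, and the parametric solution via the Kleene star, are then fairly standard manipulations once the spectral radius has been normalized to $\mathbb{1}$.
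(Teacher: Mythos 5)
Your proposal is correct, and the comparison here is somewhat one-sided: the paper does not prove Lemma~\ref{L-minxAx} at all, but imports it from the cited works of Krivulin, where the argument runs along essentially the route you chose --- lower bound on $\bm{x}^{-}\bm{A}\bm{x}$ via the extremal (cycle-mean) characterization of the spectral radius, reduction of attainment to the inequality $\bm{A}_{\lambda}\bm{x}\leq\bm{x}$, and parametrization of its solution set by the Kleene star $\bm{A}_{\lambda}^{\ast}$. So your sketch supplies a self-contained proof that the paper itself omits. Two steps deserve a little more care if you write it out in full. First, the inclusion $\bm{A}_{\lambda}\bm{A}_{\lambda}^{\ast}\leq\bm{A}_{\lambda}^{\ast}$, equivalently $\bm{A}_{\lambda}^{n}\leq\bm{I}\oplus\bm{A}_{\lambda}\oplus\cdots\oplus\bm{A}_{\lambda}^{n-1}$, is asserted rather than proved; it is exactly here that the normalization to unit spectral radius is used, via the path-decomposition argument: any path of length $n$ on $n$ nodes contains a cycle, the cycle weight is at most $\mathbb{1}$ because all cycle means of $\bm{A}_{\lambda}$ are at most $\mathbb{1}$, and deleting the cycle leaves a shorter path of no smaller weight. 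Second, your closing regularity claim is overstated in the generality of the lemma: under the sole hypothesis $\lambda>\mathbb{0}$ the matrix $\bm{A}_{\lambda}^{\ast}$ can have zero entries (take $\bm{A}=\bm{I}$ over $\mathbb{R}_{\max,\times}$, so that $\bm{A}_{\lambda}^{\ast}=\bm{I}$), and then not every $\bm{A}_{\lambda}^{\ast}\bm{u}$ with $\bm{u}\neq\bm{0}$ is regular; the statement must be read as saying that the regular solutions are precisely the regular vectors of that form, which is what your necessity step $\bm{x}=\bm{A}_{\lambda}^{\ast}\bm{x}$ establishes, while attainment of the value $\lambda$ follows because $\bm{A}_{\lambda}^{\ast}\bm{u}$ is regular whenever $\bm{u}$ is regular, the diagonal entries of $\bm{A}_{\lambda}^{\ast}$ being $\mathbb{1}$.
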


From Lemma~\ref{L-minxAx} it follows that the solutions (together with the zero vector) form an idempotent vector space generated by the columns of the matrix $\bm{A}_{\lambda}^{\ast}$.

\section{Matrix Approximation Problems}
\label{S-MAP}

In this section, we examine problems of approximating square matrices by reciprocal matrices of unit rank. The problems are formulated in terms of tropical mathematics, and then solved as tropical optimization problems.

Let $\bm{A}\in\mathbb{X}^{n\times n}$ be a matrix. Consider the approximation problem to find regular vectors $\bm{x}=(x_{j})\in\mathbb{X}^{n}$ that
\begin{equation}
\begin{aligned}
&
\text{minimize}
&&
d(\bm{A},\bm{x}\bm{x}^{-}),
\\
&
\text{subject to}
&&
x_{j}
>
\mathbb{0},
\quad
j=1,\ldots,n;
\end{aligned}
\label{P-dAxx}
\end{equation}
where $d$ is the Chebyshev-like distance function \eqref{E-dAB}, which is taken as a measure of approximation error.

\subsection{Approximation of One Matrix}

We start with approximating a matrix that may not be reciprocal.

\begin{theorem}
\label{T-mindAxx}
Let $\bm{A}$ be a matrix such that the matrix $\bm{B}=\bm{A}\oplus\bm{A}^{-}$ has no zero entries, $\mu$ be the spectral radius of $\bm{B}$, and $\bm{B}_{\mu}=\mu^{-1}\bm{B}$. Then, the minimum value in problem \eqref{P-dAxx} is equal to $\mu$, and all solutions are given by
$$
\bm{x}
=
\bm{B}_{\mu}^{\ast}\bm{u},
\qquad
\bm{u}
\ne
\bm{0}.
$$
\end{theorem}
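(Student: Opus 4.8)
The plan is to reduce the matrix approximation problem \eqref{P-dAxx} to the tropical optimization problem \eqref{P-minxAx} already solved in Lemma~\ref{L-minxAx}, applied to the matrix $\bm{B}=\bm{A}\oplus\bm{A}^{-}$. The heart of the argument is to show that the objective function $d(\bm{A},\bm{x}\bm{x}^{-})$ collapses to the bilinear form $\bm{x}^{-}\bm{B}\bm{x}$, after which the statement follows at once.

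First I would expand the distance function \eqref{E-dAB} with $\bm{X}=\bm{x}\bm{x}^{-}$. The crucial observation is that a rank-one matrix of the form $\bm{x}\bm{x}^{-}$ is reciprocal, that is $(\bm{x}\bm{x}^{-})^{-}=\bm{x}\bm{x}^{-}$; this follows from the rule $(\bm{A}\bm{B})^{-}=\bm{B}^{-}\bm{A}^{-}$ together with $(\bm{x}^{-})^{-}=\bm{x}$ for a regular vector $\bm{x}$. Substituting into \eqref{E-dAB} gives
$$
d(\bm{A},\bm{x}\bm{x}^{-})
=
\mathop\mathrm{tr}(\bm{x}\bm{x}^{-}\bm{A})
\oplus
\mathop\mathrm{tr}(\bm{A}^{-}\bm{x}\bm{x}^{-}).
$$

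Next I would apply the cyclic property $\mathop\mathrm{tr}(\bm{C}\bm{D})=\mathop\mathrm{tr}(\bm{D}\bm{C})$ to each summand, moving the leading column vector $\bm{x}$ to the right. Since $\bm{x}^{-}\bm{A}\bm{x}$ and $\bm{x}^{-}\bm{A}^{-}\bm{x}$ are scalars, their traces coincide with themselves, and the distributivity of matrix multiplication over $\oplus$ yields
$$
d(\bm{A},\bm{x}\bm{x}^{-})
=
\bm{x}^{-}\bm{A}\bm{x}
\oplus
\bm{x}^{-}\bm{A}^{-}\bm{x}
=
\bm{x}^{-}(\bm{A}\oplus\bm{A}^{-})\bm{x}
=
\bm{x}^{-}\bm{B}\bm{x}.
$$
This identifies problem \eqref{P-dAxx} with problem \eqref{P-minxAx} written for the matrix $\bm{B}$.

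Finally, I would observe that, because $\bm{B}$ has no zero entries, its diagonal is nonzero, so $\mathop\mathrm{tr}\bm{B}\ne\mathbb{0}$ and hence the spectral radius obeys $\mu\geq\mathop\mathrm{tr}\bm{B}>\mathbb{0}$. Lemma~\ref{L-minxAx} applied to $\bm{B}$ then supplies both the minimum value $\mu$ and the complete family of regular solutions $\bm{x}=\bm{B}_{\mu}^{\ast}\bm{u}$ with $\bm{u}\ne\bm{0}$, which finishes the proof. I do not expect a serious difficulty here; the only point needing care is the reciprocity identity $(\bm{x}\bm{x}^{-})^{-}=\bm{x}\bm{x}^{-}$ and the bookkeeping of the conjugate transpose in the trace manipulations, since these are precisely what makes the two trace terms combine so cleanly into a single quadratic form in $\bm{B}$.
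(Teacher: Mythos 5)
Your proof is correct and follows essentially the same route as the paper's own argument: both reduce the objective $d(\bm{A},\bm{x}\bm{x}^{-})$ to the quadratic form $\bm{x}^{-}\bm{B}\bm{x}$ via the reciprocity identity $(\bm{x}\bm{x}^{-})^{-}=\bm{x}\bm{x}^{-}$ and the trace properties, then invoke Lemma~\ref{L-minxAx} after noting that the absence of zero entries in $\bm{B}$ forces $\mu>\mathbb{0}$. Your extra details (the justification of the reciprocity identity and the bound $\mu\geq\mathop\mathrm{tr}\bm{B}$) are sound elaborations of steps the paper leaves implicit.
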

\begin{proof}
Considering $(\bm{x}\bm{x}^{-})^{-}=\bm{x}\bm{x}^{-}$, we use properties of the trace to write the objective function in \eqref{P-dAxx} as
\begin{equation*}
d(\bm{A},\bm{x}\bm{x}^{-})
=
\mathop\mathrm{tr}((\bm{x}\bm{x}^{-})^{-}\bm{A})
\oplus
\mathop\mathrm{tr}(\bm{A}^{-}\bm{x}\bm{x}^{-})
=
\bm{x}^{-}\bm{A}\bm{x}
\oplus
\bm{x}^{-}\bm{A}^{-}\bm{x}
=
\bm{x}^{-}\bm{B}\bm{x}.
\end{equation*}

Since the matrix $\bm{B}$ has no zero entries, it follows from \eqref{E-lambda-ai1i2ai2i3aiki1} that $\bm{B}$ has the spectral radius $\mu\ne\mathbb{0}$. Therefore, we can define the matrix $\bm{B}_{\mu}=\mu^{-1}\bm{B}$, and then apply Lemma~\ref{L-minxAx} to obtain the solution.
\end{proof}

Suppose that the matrix $\bm{A}$ is reciprocal. Then, we have $\bm{A}^{-}=\bm{A}$, and the theorem reduces to the following result, which closely reproduces that of Lemma~\ref{L-minxAx}. 
\begin{corollary}
\label{C-mindAxx}
Let $\bm{A}$ be a reciprocal matrix with spectral radius $\lambda$, and $\bm{A}_{\lambda}=\lambda^{-1}\bm{A}$. Then, the minimum in \eqref{P-dAxx} is equal to $\lambda$, and all solutions are given by
$$
\bm{x}
=
\bm{A}_{\lambda}^{\ast}\bm{u},
\qquad
\bm{u}
\ne
\bm{0}.
$$
\end{corollary}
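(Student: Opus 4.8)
The plan is to derive the statement as a direct specialization of Theorem~\ref{T-mindAxx}, exploiting the single extra hypothesis that $\bm{A}$ is reciprocal. The first step is to record the consequence of reciprocity: the defining condition $\bm{A}^{-}=\bm{A}$ means that the auxiliary matrix appearing in Theorem~\ref{T-mindAxx}, namely $\bm{B}=\bm{A}\oplus\bm{A}^{-}$, collapses. Indeed, substituting $\bm{A}^{-}=\bm{A}$ gives $\bm{B}=\bm{A}\oplus\bm{A}$, and by idempotency of the addition $\oplus$ we have $\bm{A}\oplus\bm{A}=\bm{A}$ entrywise, so $\bm{B}=\bm{A}$.

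With this identification in hand, I would simply propagate it through the conclusion of Theorem~\ref{T-mindAxx}. Since $\bm{B}=\bm{A}$, the spectral radius $\mu$ of $\bm{B}$ equals the spectral radius $\lambda$ of $\bm{A}$, and the normalized matrix $\bm{B}_{\mu}=\mu^{-1}\bm{B}$ becomes $\lambda^{-1}\bm{A}=\bm{A}_{\lambda}$, whence $\bm{B}_{\mu}^{\ast}=\bm{A}_{\lambda}^{\ast}$. Theorem~\ref{T-mindAxx} then yields that the minimum of $d(\bm{A},\bm{x}\bm{x}^{-})$ equals $\mu=\lambda$ and that all solutions are $\bm{x}=\bm{B}_{\mu}^{\ast}\bm{u}=\bm{A}_{\lambda}^{\ast}\bm{u}$ with $\bm{u}\ne\bm{0}$, which is exactly the claimed conclusion.

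The only point requiring a moment of care is verifying that the hypothesis of Theorem~\ref{T-mindAxx} is met, i.e. that $\bm{B}$ has no zero entries. This is immediate here because $\bm{B}=\bm{A}$, and a reciprocal matrix in the present pairwise-comparison setting has only nonzero (positive) entries, so the spectral radius is nonzero and $\bm{B}_{\mu}$ is well defined. As an alternative route that avoids invoking Theorem~\ref{T-mindAxx} as a black box, one could repeat its opening computation directly: using $(\bm{x}\bm{x}^{-})^{-}=\bm{x}\bm{x}^{-}$ together with $\bm{A}^{-}=\bm{A}$, the distance rewrites as $\bm{x}^{-}\bm{A}\bm{x}\oplus\bm{x}^{-}\bm{A}^{-}\bm{x}=\bm{x}^{-}\bm{A}\bm{x}\oplus\bm{x}^{-}\bm{A}\bm{x}=\bm{x}^{-}\bm{A}\bm{x}$, so that Lemma~\ref{L-minxAx} applies verbatim with $\bm{A}$ in place of the generic matrix. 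I do not anticipate a genuine obstacle: the entire content is the idempotency collapse $\bm{A}\oplus\bm{A}=\bm{A}$, after which the result is a transcription of the preceding theorem.
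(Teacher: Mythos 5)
Your proposal is correct and matches the paper's own derivation: the paper obtains the corollary by observing that reciprocity gives $\bm{A}^{-}=\bm{A}$, so that $\bm{B}=\bm{A}\oplus\bm{A}^{-}$ collapses to $\bm{A}$ by idempotency and Theorem~\ref{T-mindAxx} specializes directly. Your extra remarks (checking the no-zero-entries hypothesis and the alternative route through Lemma~\ref{L-minxAx}) are sound refinements of the same argument, not a different approach.
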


\subsection{Approximation of Several Matrices}

We now suppose that there are $m$ matrices $\bm{A}_{1},\ldots,\bm{A}_{m}\in\mathbb{X}^{n\times n}$ to determine a reciprocal matrix of rank $1$ that approximates these matrices simultaneously. The approximation problem can be formulated in the form
\begin{equation}
\begin{aligned}
&
\text{minimize}
&&
\max_{1\leq i\leq m}d(\bm{A}_{i},\bm{x}\bm{x}^{-}),
\\
&
\text{subject to}
&&
x_{j}
>
\mathbb{0},
\quad
j=1,\ldots,n.
\end{aligned}
\label{P-dmaxAixx}
\end{equation}

For nonreciprocal matrices, a solution is as follows. 
\begin{theorem}
\label{T-dmaxAixx}
Let $\bm{A}_{i}$ be matrices for $i=1,\ldots,m$, such that the matrix $\bm{B}=\bm{A}_{1}\oplus\bm{A}_{1}^{-}\oplus\cdots\oplus\bm{A}_{m}\oplus\bm{A}_{m}^{-}$ has no zero entries, $\mu$ be the spectral radius of $\bm{B}$, and $\bm{B}_{\mu}=\mu^{-1}\bm{B}$. Then, the minimum value in problem \eqref{P-dmaxAixx} is equal to $\mu$, and all solutions are given by
$$
\bm{x}
=
\bm{B}_{\mu}^{\ast}\bm{u},
\qquad
\bm{u}
\ne
\bm{0}.
$$
\end{theorem}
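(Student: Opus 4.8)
The plan is to reduce problem \eqref{P-dmaxAixx} to the single-matrix tropical optimization problem \eqref{P-minxAx} and then invoke Lemma~\ref{L-minxAx}, following the same strategy as in the proof of Theorem~\ref{T-mindAxx}. The observation that makes this reduction possible is that, in the idempotent semifield, the maximum operation coincides with addition $\oplus$. Hence the outer maximum over the index $i$ in the objective of \eqref{P-dmaxAixx} is nothing but an $\oplus$-sum of the individual Chebyshev-like distances, and the several-matrix problem should collapse to a single quadratic-type form in $\bm{x}$.

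First I would rewrite each summand separately. For a fixed $i$, the manipulation used in Theorem~\ref{T-mindAxx}, based on the identity $(\bm{x}\bm{x}^{-})^{-}=\bm{x}\bm{x}^{-}$ together with the trace properties, yields
$$
d(\bm{A}_{i},\bm{x}\bm{x}^{-})
=
\bm{x}^{-}\bm{A}_{i}\bm{x}
\oplus
\bm{x}^{-}\bm{A}_{i}^{-}\bm{x}.
$$
Next I would collect these expressions over all $i$. Replacing the maximum by $\oplus$ and using distributivity to factor $\bm{x}^{-}$ on the left and $\bm{x}$ on the right, the objective function takes the form
$$
\max_{1\leq i\leq m}d(\bm{A}_{i},\bm{x}\bm{x}^{-})
=
\bm{x}^{-}
\left(
\bigoplus_{i=1}^{m}(\bm{A}_{i}\oplus\bm{A}_{i}^{-})
\right)
\bm{x}
=
\bm{x}^{-}\bm{B}\bm{x},
$$
where $\bm{B}=\bm{A}_{1}\oplus\bm{A}_{1}^{-}\oplus\cdots\oplus\bm{A}_{m}\oplus\bm{A}_{m}^{-}$ is precisely the matrix in the statement.

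Having brought the objective to the form $\bm{x}^{-}\bm{B}\bm{x}$, the problem is now an instance of \eqref{P-minxAx} with the matrix $\bm{B}$. Since $\bm{B}$ has no zero entries by hypothesis, formula \eqref{E-lambda-ai1i2ai2i3aiki1} guarantees that its spectral radius satisfies $\mu\ne\mathbb{0}$, so that $\bm{B}_{\mu}=\mu^{-1}\bm{B}$ is well-defined. Applying Lemma~\ref{L-minxAx} then delivers both assertions at once: the minimum value equals $\mu$, and the complete set of regular solutions is $\bm{x}=\bm{B}_{\mu}^{\ast}\bm{u}$ with $\bm{u}\ne\bm{0}$.

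I do not anticipate a genuine obstacle here, since the argument is a direct extension of Theorem~\ref{T-mindAxx} and every step amounts to a routine use of distributivity and the previously recorded trace identities. The only point deserving care is the identification of the outer maximum with idempotent addition; once this is made, the several-matrix case reduces to the single-matrix case with $\bm{A}$ replaced by the aggregated matrix $\bm{B}$, and no new analytical difficulty arises.
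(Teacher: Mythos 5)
Your proposal is correct and follows essentially the same route as the paper's own proof: rewrite each $d(\bm{A}_{i},\bm{x}\bm{x}^{-})$ as $\bm{x}^{-}(\bm{A}_{i}\oplus\bm{A}_{i}^{-})\bm{x}$ via the argument of Theorem~\ref{T-mindAxx}, replace the maximum by $\oplus$ to obtain the objective $\bm{x}^{-}\bm{B}\bm{x}$, and apply Lemma~\ref{L-minxAx}. Your additional remark that the no-zero-entries hypothesis guarantees $\mu\ne\mathbb{0}$ via \eqref{E-lambda-ai1i2ai2i3aiki1} is a sound touch that the paper makes explicitly only in the proof of Theorem~\ref{T-mindAxx}.
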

\begin{proof}
For each $i=1,\ldots,m$, the same argument as in Theorem~\ref{T-mindAxx} yields $d(\bm{A}_{i},\bm{x}\bm{x}^{-})=\bm{x}^{-}(\bm{A}_{i}\oplus\bm{A}_{i}^{-})\bm{x}$. We replace $\max$ by $\oplus$, and rewrite the objective function as
$$
\max_{1\leq i\leq m}d(\bm{A}_{i},\bm{x}\bm{x}^{-})
=
\bm{x}^{-}\bigoplus_{i=1}^{m}(\bm{A}_{i}\oplus\bm{A}_{i}^{-})\bm{x}
=
\bm{x}^{-}\bm{B}\bm{x}.
$$

An application of Lemma~\ref{L-minxAx} to the problem with the new objective function completes the proof.
\end{proof}

If the matrices $\bm{A}_{1},\ldots,\bm{A}_{m}$ are reciprocal, the statement of Theorem~\ref{T-dmaxAixx} is valid in the next reduced form.

\begin{corollary}
\label{C-dmaxAixx}
Let $\bm{A}_{i}$ be reciprocal matrices for $i=1,\ldots,m$, the matrix $\bm{B}=\bm{A}_{1}\oplus\cdots\oplus\bm{A}_{m}$ have spectral radius $\mu$, and $\bm{B}_{\mu}=\mu^{-1}\bm{B}$. Then, the minimum in \eqref{P-dmaxAixx} is equal to $\mu$, and all solutions are given by
$$
\bm{x}
=
\bm{B}_{\mu}^{\ast}\bm{u},
\qquad
\bm{u}
\ne
\bm{0}.
$$
\end{corollary}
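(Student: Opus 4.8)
The plan is to derive this corollary directly from Theorem~\ref{T-dmaxAixx} by exploiting the reciprocal hypothesis to simplify the combined matrix that appears there. In Theorem~\ref{T-dmaxAixx}, the relevant matrix is $\bm{A}_{1}\oplus\bm{A}_{1}^{-}\oplus\cdots\oplus\bm{A}_{m}\oplus\bm{A}_{m}^{-}$, whereas in the present corollary it is $\bm{B}=\bm{A}_{1}\oplus\cdots\oplus\bm{A}_{m}$. The crux of the argument is to show that these two matrices coincide under the reciprocal assumption.

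First I would invoke the defining property $\bm{A}_{i}^{-}=\bm{A}_{i}$ of each reciprocal matrix to replace every conjugate-transpose term, turning the matrix of Theorem~\ref{T-dmaxAixx} into $\bm{A}_{1}\oplus\bm{A}_{1}\oplus\cdots\oplus\bm{A}_{m}\oplus\bm{A}_{m}$. Then, appealing to the idempotency of tropical addition, which gives $\bm{A}_{i}\oplus\bm{A}_{i}=\bm{A}_{i}$ entrywise, I would collapse this expression to $\bm{A}_{1}\oplus\cdots\oplus\bm{A}_{m}=\bm{B}$. This identifies the general matrix of Theorem~\ref{T-dmaxAixx} with the simpler matrix $\bm{B}$ stated in the corollary.

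Next I would confirm that the hypotheses of Theorem~\ref{T-dmaxAixx} are met. Since each reciprocal comparison matrix has only positive entries, the entrywise maximum $\bm{B}$ has no zero entries, so its spectral radius $\mu$ is nonzero and $\bm{B}_{\mu}=\mu^{-1}\bm{B}$ is well defined. With the two matrices identified and the nonzero-entry condition verified, Theorem~\ref{T-dmaxAixx} applies verbatim and delivers both the minimum value $\mu$ and the complete solution set $\bm{x}=\bm{B}_{\mu}^{\ast}\bm{u}$ with $\bm{u}\ne\bm{0}$.

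I expect no substantive obstacle here: the entire content of the corollary is the observation that reciprocity together with idempotent addition eliminates the conjugate-transpose terms, reducing the general $\bm{B}$ of Theorem~\ref{T-dmaxAixx} to the matrix stated in the corollary. The single point deserving a word of care is the no-zero-entries requirement, which follows immediately from the positivity of reciprocal matrices.
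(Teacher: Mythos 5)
Your proposal is correct and is exactly the derivation the paper intends: the corollary follows from Theorem~\ref{T-dmaxAixx} by using $\bm{A}_{i}^{-}=\bm{A}_{i}$ and the idempotency $\bm{A}_{i}\oplus\bm{A}_{i}=\bm{A}_{i}$ to collapse the matrix $\bm{A}_{1}\oplus\bm{A}_{1}^{-}\oplus\cdots\oplus\bm{A}_{m}\oplus\bm{A}_{m}^{-}$ to $\bm{B}=\bm{A}_{1}\oplus\cdots\oplus\bm{A}_{m}$. Your extra remark verifying the no-zero-entries hypothesis (via positivity of the reciprocal comparison matrices, ensuring $\mu>\mathbb{0}$) is a point the paper leaves implicit, and it does no harm.
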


\subsection{Weighted Approximation of Matrices}

Let $\bm{A}_{1},\ldots,\bm{A}_{m}\in\mathbb{X}^{n\times n}$ be matrices and $w_{1},\ldots,w_{m}\in\mathbb{X}$ be scalars. We consider these scalars as weights to write the problem of weighted approximation in the form
\begin{equation}
\begin{aligned}
&
\text{minimize}
&&
\max_{1\leq i\leq m}w_{i}d(\bm{A}_{i},\bm{x}\bm{x}^{-}),
\\
&
\text{subject to}
&&
x_{j}
>
\mathbb{0},
\quad
j=1,\ldots,n.
\end{aligned}
\label{P-widmaxAixx}
\end{equation}

The next result provides a solution to the problem in the general case of nonreciprocal matrices.
\begin{theorem}
\label{T-widmaxAixx}
Let $\bm{A}_{i}$ be matrices for $i=1,\ldots,m$, such that the matrix $\bm{B}=w_{1}(\bm{A}_{1}\oplus\bm{A}_{1}^{-})\oplus\cdots\oplus w_{m}(\bm{A}_{m}\oplus\bm{A}_{m}^{-})$ has no zero entries, $\mu$ be the spectral radius of $\bm{B}$, and $\bm{B}_{\mu}=\mu^{-1}\bm{B}$. Then, the minimum value in problem \eqref{P-widmaxAixx} is equal to $\mu$, and all solutions are given by
$$
\bm{x}
=
\bm{B}_{\mu}^{\ast}\bm{u},
\qquad
\bm{u}
\ne
\bm{0}.
$$
\end{theorem}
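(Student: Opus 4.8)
The plan is to follow exactly the pattern established in Theorems~\ref{T-mindAxx} and~\ref{T-dmaxAixx}, reducing the weighted objective in \eqref{P-widmaxAixx} to the canonical quadratic form $\bm{x}^{-}\bm{B}\bm{x}$ so that Lemma~\ref{L-minxAx} applies directly. First I would recall from the proof of Theorem~\ref{T-mindAxx} that for each $i$ the individual distance satisfies $d(\bm{A}_{i},\bm{x}\bm{x}^{-})=\bm{x}^{-}(\bm{A}_{i}\oplus\bm{A}_{i}^{-})\bm{x}$, since $(\bm{x}\bm{x}^{-})^{-}=\bm{x}\bm{x}^{-}$ and the trace identities collapse both trace terms into the same scalar form.

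Next I would multiply through by the weight $w_{i}$. Using the scalar identity $\mathop\mathrm{tr}(x\bm{A})=x\mathop\mathrm{tr}\bm{A}$ — equivalently, that scalars commute freely with all the vector and matrix products in the semifield — the weighted term becomes
\begin{equation*}
w_{i}d(\bm{A}_{i},\bm{x}\bm{x}^{-})
=
w_{i}\bm{x}^{-}(\bm{A}_{i}\oplus\bm{A}_{i}^{-})\bm{x}
=
\bm{x}^{-}w_{i}(\bm{A}_{i}\oplus\bm{A}_{i}^{-})\bm{x}.
\end{equation*}
Then, exactly as before, I would replace $\max$ by $\oplus$, pull the conjugate transpose $\bm{x}^{-}$ out on the left and $\bm{x}$ out on the right of the maximum, and combine the summands into the single matrix $\bm{B}=w_{1}(\bm{A}_{1}\oplus\bm{A}_{1}^{-})\oplus\cdots\oplus w_{m}(\bm{A}_{m}\oplus\bm{A}_{m}^{-})$, yielding the objective $\bm{x}^{-}\bm{B}\bm{x}$.

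Finally, since $\bm{B}$ is assumed to have no zero entries, formula \eqref{E-lambda-ai1i2ai2i3aiki1} guarantees a nonzero spectral radius $\mu$, so the matrix $\bm{B}_{\mu}=\mu^{-1}\bm{B}$ is well-defined and Lemma~\ref{L-minxAx} delivers both the minimum value $\mu$ and the full solution set $\bm{x}=\bm{B}_{\mu}^{\ast}\bm{u}$ for $\bm{u}\ne\bm{0}$. There is no genuine obstacle here: the only point requiring any care is confirming that the scalar weights $w_{i}$ commute through the bilinear form so that they can be absorbed into $\bm{B}$ rather than left outside, but this is immediate from commutativity of $\otimes$ and the trace scaling rule. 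The proof is thus a routine adaptation of Theorem~\ref{T-dmaxAixx}, with the single modification that each summand carries its weight before the $\oplus$-aggregation.
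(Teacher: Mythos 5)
Your proposal is correct and follows essentially the same route as the paper's own proof: reduce each weighted distance to $\bm{x}^{-}w_{i}(\bm{A}_{i}\oplus\bm{A}_{i}^{-})\bm{x}$, replace $\max$ by $\oplus$ to obtain the objective $\bm{x}^{-}\bm{B}\bm{x}$, and invoke Lemma~\ref{L-minxAx}. Your explicit remarks on scalar commutativity and on the nonzero spectral radius guaranteed by \eqref{E-lambda-ai1i2ai2i3aiki1} are details the paper leaves implicit here (the latter appears only in the proof of Theorem~\ref{T-mindAxx}), but they match its argument exactly.
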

\begin{proof}
In the similar way as before, we first write
$$
\max_{1\leq i\leq m}w_{i}d(\bm{A}_{i},\bm{x}\bm{x}^{-})
=
\bm{x}^{-}\bigoplus_{i=1}^{m}w_{i}(\bm{A}_{i}\oplus\bm{A}_{i}^{-})\bm{x}
=
\bm{x}^{-}\bm{B}\bm{x},
$$
and then apply Lemma~\ref{L-minxAx} to complete the proof.
\end{proof}

We conclude with the case of reciprocal matrices.

\begin{corollary}
\label{C-widmaxAixx}
Let $\bm{A}_{i}$ be reciprocal matrices for $i=1,\ldots,m$, the matrix $\bm{B}=w_{1}\bm{A}_{1}\oplus\cdots\oplus w_{m}\bm{A}_{m}$ have spectral radius $\mu$, and $\bm{B}_{\mu}=\mu^{-1}\bm{B}$. Then, the minimum in \eqref{P-widmaxAixx} is equal to $\mu$, and all solutions are given by
$$
\bm{x}
=
\bm{B}_{\mu}^{\ast}\bm{u},
\qquad
\bm{u}
\ne
\bm{0}.
$$
\end{corollary}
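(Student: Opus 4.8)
The plan is to obtain this statement as the immediate reciprocal specialization of Theorem~\ref{T-widmaxAixx}, exactly as Corollary~\ref{C-mindAxx} and Corollary~\ref{C-dmaxAixx} specialize their parent theorems. The single fact driving the reduction is that a reciprocal matrix satisfies $\bm{A}_{i}^{-}=\bm{A}_{i}$, whence idempotency of addition gives $\bm{A}_{i}\oplus\bm{A}_{i}^{-}=\bm{A}_{i}\oplus\bm{A}_{i}=\bm{A}_{i}$. Substituting this into the matrix $\bm{B}=w_{1}(\bm{A}_{1}\oplus\bm{A}_{1}^{-})\oplus\cdots\oplus w_{m}(\bm{A}_{m}\oplus\bm{A}_{m}^{-})$ of Theorem~\ref{T-widmaxAixx} collapses each bracketed term and leaves precisely the matrix $\bm{B}=w_{1}\bm{A}_{1}\oplus\cdots\oplus w_{m}\bm{A}_{m}$ named in the corollary, after which one invokes the theorem verbatim.

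If instead I were to argue directly from Lemma~\ref{L-minxAx}, I would first recall from the proof of Theorem~\ref{T-mindAxx} that, for each $i$, the trace identities turn a single weighted term into $w_{i}d(\bm{A}_{i},\bm{x}\bm{x}^{-})=w_{i}\bm{x}^{-}(\bm{A}_{i}\oplus\bm{A}_{i}^{-})\bm{x}$, which under the reciprocal hypothesis reduces to $w_{i}\bm{x}^{-}\bm{A}_{i}\bm{x}$. Then, following the proof of Theorem~\ref{T-widmaxAixx}, I would replace $\max$ by $\oplus$ and factor $\bm{x}^{-}$ and $\bm{x}$ out of the sum, so that the objective function becomes
$$
\max_{1\le i\le m} w_{i}d(\bm{A}_{i},\bm{x}\bm{x}^{-})
=
\bm{x}^{-}\left(\bigoplus_{i=1}^{m} w_{i}\bm{A}_{i}\right)\bm{x}
=
\bm{x}^{-}\bm{B}\bm{x}.
$$
A final application of Lemma~\ref{L-minxAx}, with $\bm{A}_{\lambda}$ replaced by $\bm{B}_{\mu}=\mu^{-1}\bm{B}$, yields both the minimum value $\mu$ and the complete solution set $\bm{x}=\bm{B}_{\mu}^{\ast}\bm{u}$, $\bm{u}\ne\bm{0}$.

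There is essentially no obstacle in this argument; it is a routine corollary. The only points deserving care are two. First, one should confirm that the reciprocal hypothesis genuinely merges $\bm{A}_{i}\oplus\bm{A}_{i}^{-}$ into $\bm{A}_{i}$ through idempotency, rather than leaving behind a spurious factor of two as would happen over a conventional field; this is exactly the feature of an idempotent semifield that makes the reduction clean. Second, Lemma~\ref{L-minxAx} requires the nondegeneracy condition $\mu>\mathbb{0}$, so one must note that this holds whenever the $\bm{A}_{i}$ are regular with nonzero weights, since then $\bm{B}$ has a nonzero entry and, by \eqref{E-lambda-ai1i2ai2i3aiki1}, a nonzero spectral radius.
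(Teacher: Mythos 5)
Your proposal is correct and matches the paper exactly: the paper states Corollary~\ref{C-widmaxAixx} as the immediate specialization of Theorem~\ref{T-widmaxAixx} to reciprocal matrices, where $\bm{A}_{i}^{-}=\bm{A}_{i}$ and idempotency collapse $w_{i}(\bm{A}_{i}\oplus\bm{A}_{i}^{-})$ to $w_{i}\bm{A}_{i}$, which is precisely your primary argument. Your secondary route via Lemma~\ref{L-minxAx} and your remarks on idempotency and on $\mu>\mathbb{0}$ are sound but add nothing beyond the paper's (implicit) reasoning.
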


\section{Approximation by Consistent Matrices}
\label{S-ACM}

In the framework of tropical mathematics, both multiplicative and additive consistent matrices $\bm{X}$ can be represented in a common form of the reciprocal matrix of rank $1$, given by the condition
$$
\bm{X}
=
\bm{x}\bm{x}^{-},
$$
to be interpreted in terms of the semifields $\mathbb{R}_{\max,\times}$ for the multiplicative case, and $\mathbb{R}_{\max,+}$ for the additive.

The problem of finding an approximate consistent matrix for pairwise comparison matrices can then be solved as an approximation problem \eqref{P-dAxx} in the log-Chebyshev or Chebyshev sense. Theorems~\ref{T-mindAxx}--\ref{T-widmaxAixx} and their corollaries provide new complete direct solutions to problem \eqref{P-dAxx} under various assumptions, and hence to the problem of evaluating the scores of alternatives from pairwise comparisons in the analysis of preference.

The application of these results offers a further improvement of the known solutions in \cite{Elsner2004Maxalgebra,Elsner2010Maxalgebra}. Specifically, the new solutions replace the computation of all the eigenvectors of a matrix $\bm{A}$ as the columns in the matrix $\bm{A}_{\lambda}^{+}$ obtained from the matrix $\bm{A}_{\lambda}^{\ast}$ to the calculation of $\bm{A}_{\lambda}^{\ast}$ alone. At the same time, these solutions can provide a significant extension of the solution set since the column space of $\bm{A}_{\lambda}^{\ast}$ is known to include the eigenspace for $\bm{A}$, which is generated by columns in $\bm{A}_{\lambda}^{+}$.

Note that it may appear that the eigenspace of $\bm{A}$ coincides with the column space of $\bm{A}_{\lambda}^{\ast}$ suggested by the new solutions, as in the following trivial example.
\begin{example}
{\rm
Consider the reciprocal matrix
$$
\bm{A}
=
\left(
\begin{array}{cc}
1 & a
\\
a^{-1} & 1
\end{array}
\right).
$$

Using formula \eqref{E-lambda-ai1i2ai2i3aiki1} in the context of the semifield $\mathbb{R}_{\max,\times}$ yields $\lambda=1$. Furthermore, we have
$$
\bm{A}_{\lambda}
=
\bm{A},
\qquad
\bm{A}_{\lambda}^{\ast}
=
\bm{I}\oplus\bm{A}_{\lambda}
=
\bm{A},
\qquad
\bm{A}_{\lambda}^{\times}
=
\bm{A}_{\lambda}\bm{A}_{\lambda}^{\ast}
=
\bm{A}.
$$

Since $\bm{A}_{\lambda}^{\times}=\bm{A}_{\lambda}^{\ast}$, we can take $\bm{A}_{\lambda}^{+}=\bm{A}_{\lambda}^{\ast}$, and thus conclude that the eigenspace of $\bm{A}$ (the column space of $\bm{A}_{\lambda}^{+}$) and the column space of $\bm{A}_{\lambda}^{\ast}$ coincide. 
}
\end{example}

The next example shows that, in general, the column space of $\bm{A}_{\lambda}^{\ast}$ is larger than the eigenspace of $\bm{A}$. Moreover, those columns of $\bm{A}_{\lambda}^{\ast}$ which do not belong to the eigenspace may offer better solutions to the problem of evaluating scores than the eigenvectors.

\begin{example} 
{\rm
Let us examine the pairwise comparisons on a multiplicative scale, given by the matrix 
$$
\bm{A}
=
\left(
\begin{array}{cccc}
1 & 2 & 1/2 & 1/2
\\
1/2 & 1 & 2 & 1/2
\\
2 & 1/2 & 1 & 1/2
\\
2 & 2 & 2 & 1
\end{array}
\right).
$$

A simple analysis of the pairwise comparison matrix yields the conclusion that the last alternative should be ranked first. The first three alternatives have lower ranks, but cannot be further differentiated.

For a formal analysis in the setting of the semifield $\mathbb{R}_{\max,\times}$, we apply \eqref{E-lambda-ai1i2ai2i3aiki1}, which gives
$$
\lambda
=
(a_{12}a_{23}a_{31})^{1/3}
=
2.
$$

Next, we define the matrix
$$
\bm{A}_{\lambda}
=
\lambda^{-1}\bm{A}
=
\left(
\begin{array}{cccc}
1/2 & 1 & 1/4 & 1/4
\\
1/4 & 1/2 & 1 & 1/4
\\
1 & 1/4 & 1/2 & 1/4
\\
1 & 1 & 1 & 1/2
\end{array}
\right).
$$

Furthermore, we calculate
\begin{equation*}
\bm{A}_{\lambda}^{2}
=
\left(
\begin{array}{cccc}
1/4 & 1/2 & 1 & 1/4
\\
1 & 1/4 & 1/2 & 1/4
\\
1/2 & 1 & 1/4 & 1/4
\\
1 & 1 & 1 & 1/4
\end{array}
\right),
\qquad
\bm{A}_{\lambda}^{3}
=
\left(
\begin{array}{cccc}
1 & 1/4 & 1/2 & 1/4
\\
1/2 & 1 & 1/4 & 1/4
\\
1/4 & 1/2 & 1 & 1/4
\\
1 & 1 & 1 & 1/4
\end{array}
\right).
\end{equation*}

Finally, we can obtain and compare the matrices
\begin{gather*}
\bm{A}_{\lambda}^{\ast}
=
\bm{I}\oplus\bm{A}_{\lambda}\oplus\bm{A}_{\lambda}^{2}\oplus\bm{A}_{\lambda}^{3}
=
\left(
\begin{array}{cccc}
1 & 1 & 1 & 1/4
\\
1 & 1 & 1 & 1/4
\\
1 & 1 & 1 & 1/4
\\
1 & 1 & 1 & 1
\end{array}
\right),
\\
\bm{A}_{\lambda}^{\times}
=
\bm{A}_{\lambda}\bm{A}_{\lambda}^{\ast}
=
\left(
\begin{array}{cccc}
1 & 1 & 1 & 1/4
\\
1 & 1 & 1 & 1/4
\\
1 & 1 & 1 & 1/4
\\
1 & 1 & 1 & 1/2
\end{array}
\right).
\end{gather*}

The first three identical columns in $\bm{A}_{\lambda}^{\ast}$ coincide with the same columns in $\bm{A}_{\lambda}^{\times}$, and thus present an eigenvector. However, in the context of decision making, this eigenvector gives no chance to rank alternatives. Although the last column in $\bm{A}_{\lambda}^{\ast}$ is not an eigenvector, it assigns the highest score to the last alternative, and equally lower scores for the others, which ranks the alternatives in line with the given matrix $\bm{A}$.
}
\end{example}

To conclude this section, we briefly comment on the computational complexity involved in the procedure of calculating the matrix $\bm{A}_{\lambda}^{\ast}$ from a pairwise comparison matrix $\bm{A}$ of order $n$. We note that the most computationally intensive part of the calculations is evaluating the first $n$ powers of the matrix $\bm{A}$. It is not difficult to see that these powers can be obtained with no more than $O(n^{4})$ scalar operations, which results at most in the same order of complexity for the entire procedure.

\section{Application to Evaluation of Scores}
\label{S-AES}

Below, we present examples that demonstrate the use of the tropical optimization technique to solve particular problems of finding the scores based on pairwise comparisons on a multiplicative scale. Considering that, in the general terms of tropical mathematics, the solutions have a common form for both multiplicative and additive scales, examples that assume an additive scale of comparison are omitted.

\subsection{Evaluation of Scores Given by One Matrix}

We start with an example of evaluating the score vector based on one reciprocal matrix. The case of a positive matrix that may not be reciprocal is presented next.   

\begin{example}
{\rm
\label{E-Areciprocal}
Consider a reciprocal matrix defined as
$$
\bm{A}
=
\left(
\begin{array}{cccc}
1 & 3 & 4 & 2
\\
1/3 & 1 & 1/2 & 1/3
\\
1/4 & 2 & 1 & 4
\\
1/2 & 3 & 1/4 & 1
\end{array}
\right).
$$

To approximate this matrix by a reciprocal matrix of unit rank, and thus to evaluate a score vector $\bm{x}$, we apply Corollary~\ref{C-mindAxx} in the setting of the semifield $\mathbb{R}_{\max,\times}$. First, we find the spectral radius $\lambda$ for $\bm{A}$. Application of \eqref{E-lambda-ai1i2ai2i3aiki1} gives
$$
\lambda
=
(a_{13}a_{34}a_{42}a_{21})^{1/4}
=
2.
$$

Furthermore, we consider the matrix
$$
\bm{A}_{\lambda}
=
\lambda^{-1}\bm{A}
=
\left(
\begin{array}{cccc}
1/2 & 3/2 & 2 & 1
\\
1/6 & 1/2 & 1/4 & 1/6
\\
1/8 & 1 & 1/2 & 2
\\
1/4 & 3/2 & 1/8 & 1/2
\end{array}
\right),
$$
and then calculate the matrices
\begin{equation*}
\bm{A}_{\lambda}^{2}
=
\left(
\begin{array}{cccc}
1/4 & 2 & 1 & 4
\\
1/12 & 1/4 & 1/3 & 1/2
\\
1/2 & 3 & 1/4 & 1 
\\
1/4 & 3/4 & 1/2 & 1/4 
\end{array}
\right),
\qquad
\bm{A}_{\lambda}^{3}
=
\left(
\begin{array}{cccc}
1 & 6 & 1/2 & 2
\\
1/8 & 3/4 & 1/6 & 2/3
\\
1/2 & 3/2 & 1 & 1/2
\\
1/8 & 1/2 & 1/2 & 1
\end{array}
\right).
\end{equation*}

Finally, we obtain the matrix
$$
\bm{A}_{\lambda}^{\ast}
=
\bm{I}\oplus\bm{A}_{\lambda}\oplus\bm{A}_{\lambda}^{2}\oplus\bm{A}_{\lambda}^{3}
=
\left(
\begin{array}{cccc}
1 & 6 & 2 & 4
\\
1/6 & 1 & 1/3 & 2/3
\\
1/2 & 3 & 1 & 2
\\
1/4 & 3/2 & 1/2 & 1
\end{array}
\right).
$$

Note that the columns in the matrix $\bm{A}_{\lambda}^{\ast}$ are collinear to each other. Specifically, the second, third and fourth columns can be obtained by multiplying the first one by $6$, $2$ and $4$, respectively. Since all columns generate exactly the same vector space, it is sufficient to use only one of them to represent all solution vectors. We take the first column and write the score vector as
$$
\bm{x}
=
\left(
\begin{array}{c}
1
\\
1/6
\\
1/2
\\
1/4
\end{array}
\right)u,
$$
where the factor $u$ can be arbitrary set to a positive number in accordance with the required form or desired interpretation of the result.

Assuming $u=1$, the vector $\bm{x}=(1,1/6,1/2,1/4)^{T}$ shows that the first alternative has the highest score $x_{1}=1$, followed by the third and fourth with scores $x_{3}=1/2$ and $x_{4}=1/4$. The second alternative has the lowest score $x_{2}=1/6$.  

If the scores are considered as weights to sum up $1$, we put $u=1/(1+1/6+1/2+1/4)=12/23$. Then, we have the vector $\bm{x}=(12/23,2/23,6/23,3/23 )^{T}$.  
}
\end{example}

\begin{example}
{\rm
\label{E-Anonreciprocal}
Consider a nonreciprocal matrix obtained from that in Example~\ref{E-Areciprocal} by a slight change in the entries in the form
$$
\bm{A}
=
\left(
\begin{array}{cccc}
1 & 4 & 3 & 2
\\
1/3 & 1 & 1/2 & 1/2
\\
1/4 & 2 & 1 & 3
\\
1/2 & 3 & 1/4 & 1
\end{array}
\right).
$$

To apply Theorem~\ref{T-mindAxx}, we calculate the matrices 
\begin{equation*}
\bm{A}^{-}
=
\left(
\begin{array}{cccc}
1 & 3 & 4 & 2
\\
1/4 & 1 & 1/2 & 1/3
\\
1/3 & 2 & 1 & 4
\\
1/2 & 2 & 1/3 & 1
\end{array}
\right),
\quad
\bm{B}
=
\bm{A}\oplus\bm{A}^{-}
=
\left(
\begin{array}{cccc}
1 & 4 & 4 & 2
\\
1/3 & 1 & 1/2 & 1/2
\\
1/3 & 2 & 1 & 4
\\
1/2 & 3 & 1/3 & 1
\end{array}
\right).
\end{equation*}

Evaluation of the spectral radius of the matrix $\bm{B}$ by using \eqref{E-lambda-ai1i2ai2i3aiki1} results in
$$
\mu
=
(b_{13}b_{34}b_{42}b_{21})^{1/4}
=
2.
$$

To find the matrix $\bm{B}_{\mu}^{\ast}$, we take the matrix
$$
\bm{B}_{\mu}
=
\mu^{-1}\bm{B}
=
\left(
\begin{array}{cccc}
1/2 & 2 & 2 & 1
\\
1/6 & 1/2 & 1/4 & 1/4
\\
1/6 & 1 & 1/2 & 2 
\\
1/4 & 3/2 & 1/6 & 1/2
\end{array}
\right).
$$

After calculating the matrix powers
\begin{equation*}
\bm{B}_{\mu}^{2}
=
\left(
\begin{array}{cccc}
1/3 & 2 & 1 & 4
\\
1/12 & 3/8 & 1/3 & 1/2
\\
1/2 & 3 & 1/3 & 1
\\
1/4 & 3/4 & 1/2 & 3/8
\end{array}
\right),
\qquad
\bm{B}_{\mu}^{3}
=
\left(
\begin{array}{cccc}
1 & 6 & 2/3 & 2
\\
1/8 & 3/4 & 1/6 & 2/3
\\
1/2 & 3/2 & 1 & 3/4
\\
1/8 & 9/16 & 1/2 & 1
\end{array}
\right),
\end{equation*}
we arrive at the matrix
$$
\bm{B}_{\mu}^{\ast}
=
\bm{I}\oplus\bm{B}_{\mu}\oplus\bm{B}_{\mu}^{2}\oplus\bm{B}_{\mu}^{3}
=
\left(
\begin{array}{cccc}
1 & 6 & 2 & 4
\\
1/6 & 1 & 1/3 & 2/3
\\
1/2 & 3 & 1 & 2
\\
1/4 & 3/2 & 1/2 & 1
\end{array}
\right).
$$

It is easy to see that the obtained matrix coincides with the matrix $\bm{A}_{\lambda}^{\ast}$ in Example~\ref{E-Areciprocal}. Since this matrix completely determines the score vector $\bm{x}$, the solution is the same as in Example~\ref{E-Areciprocal}. Specifically, as a score vector, one can take the vector $\bm{x}=(1,1/6,1/2,1/4)^{T}$.
}
\end{example}

\subsection{Evaluation of Scores From Several Matrices}

The problem of simultaneous approximation of several matrices naturally appears when multiple results of pairwise comparisons for the same set of alternatives according to a single criterion must be combined to produce a common score vector. 

\begin{example}
{\rm
Consider the problem of evaluating the scores on the basis of the simultaneous approximation of $m=2$ reciprocal matrices
\begin{equation*}
\bm{A}_{1}
=
\left(
\begin{array}{cccc}
1 & 3 & 4 & 2
\\
1/3 & 1 & 1/2 & 1/3
\\
1/4 & 2 & 1 & 3
\\
1/2 & 3 & 1/3 & 1
\end{array}
\right),
\qquad
\bm{A}_{2}
=
\left(
\begin{array}{cccc}
1 & 4 & 3 & 2
\\
1/4 & 1 & 1/2 & 1/2
\\
1/3 & 2 & 1 & 4
\\
1/2 & 2 & 1/4 & 1
\end{array}
\right).
\end{equation*}

To solve the problem, we apply Corollary~\ref{C-dmaxAixx}, which requires the calculation of the matrix
$$
\bm{B}
=
\bm{A}_{1}\oplus\bm{A}_{2}
=
\left(
\begin{array}{cccc}
1 & 4 & 4 & 2
\\
1/3 & 1 & 1/2 & 1/2
\\
1/3 & 2 & 1 & 4
\\
1/2 & 3 & 1/3 & 1
\end{array}
\right).
$$

The matrix $\bm{B}$ is the same as in Example~\ref{E-Anonreciprocal}. This allows the use of the results of this example, which offer the score vector in the form $\bm{x}=(1,1/6,1/2,1/4)^{T}$. 
}
\end{example}

\begin{example}
{\rm
Suppose that we have two matrices, which are nonreciprocal and given by
\begin{equation*}
\bm{A}_{1}
=
\left(
\begin{array}{cccc}
1 & 4 & 3 & 2
\\
1/3 & 1 & 1/2 & 1/2
\\
1/4 & 2 & 1 & 4
\\
1/2 & 3 & 1/4 & 1
\end{array}
\right),
\qquad
\bm{A}_{2}
=
\left(
\begin{array}{cccc}
1 & 3 & 4 & 2
\\
1/3 & 1 & 1/2 & 1/3
\\
1/3 & 2 & 1 & 3
\\
1/2 & 2 & 1/4 & 1
\end{array}
\right).
\end{equation*}

Application of Theorem~\ref{T-dmaxAixx} involves the matrices
\begin{equation*}
\bm{A}_{1}^{-}
=
\left(
\begin{array}{cccc}
1 & 3 & 4 & 2
\\
1/4 & 1 & 1/2 & 1/3
\\
1/3 & 2 & 1 & 4
\\
1/2 & 2 & 1/4 & 1
\end{array}
\right),
\qquad
\bm{A}_{2}^{-}
=
\left(
\begin{array}{cccc}
1 & 3 & 3 & 2
\\
1/3 & 1 & 1/2 & 1/2
\\
1/4 & 2 & 1 & 4
\\
1/2 & 3 & 1/3 & 1
\end{array}
\right)
\end{equation*}
to be used in the construction of the matrix
$$
\bm{B}
=
\bm{A}_{1}\oplus\bm{A}_{1}^{-}\oplus\bm{A}_{2}\oplus\bm{A}_{2}^{-}
=
\left(
\begin{array}{cccc}
1 & 4 & 4 & 2
\\
1/3 & 1 & 1/2 & 1/2
\\
1/3 & 2 & 1 & 4
\\
1/2 & 3 & 1/3 & 1
\end{array}
\right).
$$

The matrix $\bm{B}$ again coincides with the matrix in Example~\ref{E-Anonreciprocal}, which provides the same solution. 
}
\end{example}

\subsection{Weighted Scores From Several Matrices}

Suppose that there are several criteria for judging alternatives, and each criterion has a weight that indicates its relative importance among the criteria. If we have a pairwise comparison matrix obtained according to each criterion, a problem of weighted evaluation of scores arises, which is to find a single common score vector by combining the results of pairwise comparisons with the weights.

\begin{example}\label{X-A1A2A3w1w2w3}
{\rm
We examine the problem of evaluating the vector of scores given by $m=3$ reciprocal matrices
\begin{gather*}
\bm{A}_{1}
=
\left(
\begin{array}{cccc}
1 & 3 & 1 & 3
\\
1/3 & 1 & 1/4 & 1/2
\\
1 & 4 & 1 & 1/2
\\
1/3 & 2 & 2 & 1
\end{array}
\right),
\qquad
\bm{A}_{2}
=
\left(
\begin{array}{cccc}
1 & 2 & 1 & 4
\\
1/2 & 1 & 1/3 & 1/2
\\
1 & 1/3 & 1 & 1
\\
1/4 & 2 & 1 & 1
\end{array}
\right),
\\
\bm{A}_{3}
=
\left(
\begin{array}{cccc}
1 & 4 & 2 & 1/2
\\
1/4 & 1 & 1/2 & 1/3
\\
1/2 & 2 & 1 & 1/4
\\
2 & 3 & 4 & 1
\end{array}
\right),
\end{gather*}
which have to be taken with the weights
$$
w_{1}
=
1,
\qquad
w_{2}
=
1,
\qquad
w_{3}
=
1/2.
$$

To apply Corollary~\ref{C-widmaxAixx}, we calculate the matrix
$$
\bm{B}
=
w_{1}\bm{A}_{1}
\oplus
w_{2}\bm{A}_{2}
\oplus
w_{3}\bm{A}_{3}
=
\left(
\begin{array}{cccc}
1 & 3 & 1 & 4
\\
1/2 & 1 & 1/3 & 1/2
\\
1 & 4 & 1 & 1
\\
1 & 2 & 2 & 1
\end{array}
\right).
$$

Then, we find the spectral radius
$$
\mu
=
(b_{14}b_{43}b_{32}b_{21})^{1/4}
=
2,
$$
and examine the matrix
$$
\bm{B}_{\mu}
=
\mu^{-1}\bm{B}
=
\left(
\begin{array}{cccc}
1/2 & 3/2 & 1/2 & 2
\\
1/4 & 1/2 & 1/6 & 1/4
\\
1/2 & 2 & 1/2 & 1/2
\\
1/2 & 1 & 1 & 1/2
\end{array}
\right).
$$

After calculating the matrices
\begin{equation*}
\bm{B}_{\mu}^{2}
=
\left(
\begin{array}{cccc}
1 & 2 & 2 & 1
\\
1/8 & 3/8 & 1/4 & 1/2
\\
1/2 & 1 & 1/2 & 1
\\
1/2 & 2 & 1/2 & 1
\end{array}
\right),
\qquad
\bm{B}_{\mu}^{3}
=
\left(
\begin{array}{cccc}
1 & 4 & 1 & 2
\\
1/4 & 1/2 & 1/2 & 1/4
\\
1/2 & 1 & 1 & 1
\\
1/2 & 1 & 1 & 1
\end{array}
\right),
\end{equation*}
we form the matrix
$$
\bm{B}_{\mu}^{\ast}
=
\bm{I}\oplus\bm{B}_{\mu}\oplus\bm{B}_{\mu}^{2}\oplus\bm{B}_{\mu}^{3}
=
\left(
\begin{array}{cccc}
1 & 4 & 2 & 2
\\
1/4 & 1 & 1/2 & 1/2
\\
1/2 & 2 & 1 & 1
\\
1/2 & 2 & 1 & 1
\end{array}
\right).
$$

Considering that all columns in the matrix $\bm{B}_{\mu}^{\ast}$ are collinear, we take the first one to form the score vector
$$
\bm{x}
=
\left(
\begin{array}{c}
1
\\
1/4
\\
1/2
\\
1/2
\end{array}
\right)u,
\qquad
u
>
0.
$$
}
\end{example}

\subsection{Tropical Analytical Hierarchy Process}

The following example shows how the results obtained can be used to develop a tropical analog of the Analytical Hierarchy Process (AHP) decision scheme in multicriteria decision making. 
\begin{example}
{\rm
Suppose that the results of pairwise comparisons of four alternatives according to three criteria are given by the matrices $\bm{A}_{1}$, $\bm{A}_{2}$ and $\bm{A}_{3}$ from Example~\ref{X-A1A2A3w1w2w3}. Furthermore, the relative importance of the criteria is also compared, which yields the pairwise comparison matrix 
$$
\bm{C}
=
\left(
\begin{array}{ccc}
1 & 1 & 2
\\
1 & 1 & 2
\\
1/2 & 1/2 & 1
\end{array}
\right).
$$

Let us find a score vector $\bm{w}=(w_{1},w_{2},w_{3})^{T}$ from the matrix $\bm{C}$. Denote the spectral radius of the matrix $\bm{C}$ by $\nu$. Then, we obtain
$$
\nu
=
1,
\qquad
\bm{C}_{\nu}
=
\nu^{-1}\bm{C}
=
\bm{C},
\qquad
\bm{C}_{\nu}^{2}
=
\bm{C}_{\nu}
=
\bm{C}.
$$

To apply Corollary~\ref{C-mindAxx}, we calculate
$$
\bm{C}_{\nu}^{\ast}
=
\bm{I}\oplus\bm{C}_{\nu}\oplus\bm{C}_{\nu}^{2}
=
\bm{C}.
$$

Since all columns of $\bm{C}_{\nu}^{\ast}$ are collinear, we take one of them, say the first column. Finally, we have
$$
\bm{w}
=
\left(
\begin{array}{c}
1
\\
1
\\
1/2
\end{array}
\right).
$$

Note that the elements of the vector $\bm{w}$ coincide with the weights used in Example~\ref{X-A1A2A3w1w2w3}. Clearly, the solution of the multicriteria problem under consideration is the same as in this example, and can be represented as the score vector $\bm{x}=(1,1/4,1/2,1/2)^{T}$.
}
\end{example}

\section{Conclusions}

The paper aimed at reporting new developments in the area of applications of optimization techniques in tropical mathematics to pairwise comparison judgment in decision making. Various problems were considered, which arise in ranking alternatives from their pairwise comparisons. The proposed solution approach uses the approximation, in the Chebyshev or log-Chebyshev sense, of pairwise comparison matrices by consistent matrices, and reduces the problems of ranking alternatives to a tropical optimization problem.

By applying recent results in tropical optimization, we offered new direct, explicit solutions in the closed vector form, which is ready for practical implementation and further analysis. The solutions involve a finite number of simple matrix-vector operations, which offers a low polynomial computational complexity. An example was given to show that the new solution can be more accurate than other solutions previously obtained in the framework of tropical mathematics. Note that the numerical examples demonstrate low sensitivity of the solution vector to small variations in the matrix entries. A tropical analogue of the AHP decision scheme was also presented as an application example.

The development of new applications of the results to solve real-world problem of evaluating alternatives is considered as one of the main lines of future research.

\bibliographystyle{abbrvurl}

\bibliography{Using_tropical_optimization_techniques_to_evaluate_alternatives_via_pairwise_comparisons}

\end{document}